\numberwithin{equation}{section}
\theoremstyle{plain}
\newtheorem{thm}{Theorem}[section]
\newtheorem{lem}[thm]{Lemma}
\newtheorem{prop}[thm]{Proposition}
\theoremstyle{definition}
\newtheorem{defn}[thm]{Definition}
\theoremstyle{remark}
\newtheorem{rem}[thm]{Remark}
\DeclareMathOperator{\spt}{spt}
\newcommand{\pa}{\partial}
\title{Time almost-periodic solutions of the incompressible Euler equations}
\author{Luca Franzoi, Riccardo Montalto}
\date{}
\begin{document}

\maketitle


\begin{abstract}
We construct time almost-periodic solutions (global in time) with finite regularity to the incompressible Euler equations on the torus $\T^d$, with $d=3$ and $d\in\N$ even.
\end{abstract}

\noindent
{\em Keywords:} Fluid dynamics, Euler equations, almost-periodic solutions.

\noindent
{\em MSC 2020:}  35Q31, 37N10, 76M45.


\section{Introduction}

The goal of this paper is to construct \emph{time almost-periodic} solutions (infinite dimensional invariant tori) of the Euler equations
\begin{equation}\label{euler.eq}
\begin{aligned}
& 	\pa_{t} u + u\cdot \nabla u + \nabla p = 0 \,, \quad {\rm div} \,u = 0\,, \\
& u : \R \times \T^d \to \R^d\,, \quad p : \R \times \T^d \to \R\,,
	\end{aligned}
\end{equation}
on the $d$-dimensional torus $\T^d$, $\T:= \R / 2\pi\Z$, where either $d=3$ or $d\geq 2$ is any even positive integer. 
These solutions extend the works ot Crouseille \& Faou \cite{CF} (in dimension 2) and Enciso, Peralta Salas \& Torres de Lizaur \cite{EPsTdL} (in dimensions 3 or even) from time quasi-periodic to time almost-periodic solutions.
In fact, the construction here follows closely the one in \cite{EPsTdL}.

We need to specify how a smooth solution of the Euler equations \eqref{euler.eq} is called \emph{almost-periodic} in this paper. We need some preliminaries.

Let $\cC_{\rm div}^{s}(\T^d,\R^d)$, with $s\in\N \cup\{+\infty\}$, be the space of $\cC^s$-smooth, divergence free $d$-dimensional vector fields on $\T^d$. This space is a Banach space if $s< \infty$ and a Fr\'echet space when $s=\infty$. We endow it with the system of seminorms $(\|\,\cdot\,\|_{n,\infty})_{n\in\{0,1,...,s\}}$ defined by 
\begin{equation}
	 \|f\|_{n,\infty} := \sup_{x\in \T^d} \max_{\begin{subarray}{c}
	 \alpha\in \N_0^d \\
	  |\alpha|=n
	  \end{subarray}} |\pa_{x}^\alpha f(x) | \,, \quad n=0,1,...,s\,;
\end{equation}
throughout the paper, for sake of simplicity in the notation, $|\,\cdot \,|$ denotes the standard Euclidean norm, without specifying the dimension of the evaluated object, which will be clear from the context each time. We denote by $\ell^\infty(\N,\N)$ the set of sequences in $\N$ that are bounded.
 Let $(J_k)_{k\in\N}\in \ell^\infty(\N,\N)\setminus\{0\}$ be given and, for a fixed $m\in\{1,...,d-1\}$, we define the sequence
\begin{equation}\label{sequence.Nk}
	(N_k)_{k\in\N} \in \ell^\infty(\N,\N)\,, \quad N_k:=(d-m) J_{k} \in \N \,, \quad k\in\N\,.
\end{equation}
We define the infinite dimensional torus $(\T^{N_k})_{k\in\N}$ and its "tangent space" $(\R^{N_k})_{k\in\N}$ as 
\begin{equation}
\begin{aligned}
		& (\T^{N_k})_{k\in\N} := \big\{ \theta = (\theta_{k})_{k\in\N} \, :\, \theta_{k} \in \T^{N_k} , \ | \theta|_{\infty} < \infty  \big\}\,, \\
		& 	(\R^{N_k})_{k\in\N} := \big\{ \nu = (\nu_{k})_{k\in\N} \, :\, \nu_{k} \in \R^{N_k}  , \ | \nu |_{\infty} < \infty   \big\}\,,
\end{aligned}
\end{equation}
where we defined $| \nu |_{\infty}:= \sup_{k\in\N} |\nu_{k}|$. Note that, since $(N_{k})_{k\in\N}$ is bounded, then $| \theta |_{\infty}\leq (2\pi)^{\| (N_k)\|_{\ell^\infty}} < \infty$ for any sequence $\theta = (\theta_{k})_{k\in\N}$.

\begin{defn}
	Let $s\in\N \cup \{+\infty\}$. We say that $u(t,x)$ is \emph{time almost-periodic}
	if there exists a sequence  of vectors $\nu \in (\R^{N_k})_{k\in\N} $ and a $\cC^1$-smooth embedding $U:(\T^{N_k})_{k\in\N}\to \cC_{\rm div}^{s} (\T^d,\R^d)$ such that the velocity field $u(t,x)$ can be written as
	\begin{equation}\label{u(t).as.embedding}
		u(t,\,\cdot\,) = U(\vartheta)|_{\vartheta=\theta+\nu t} \,, \quad \text{ for some} \quad \theta\in (\T^{N_k})_{k\in\N} \,,
	\end{equation}
	and the  sequence of frequency vectors $\nu=(\nu_{k})_{k\in\N}$ is \emph{non-resonant}, meaning that
\begin{equation}\label{nonres}
	\sum_{k\in \N} \nu_{k} \cdot \ell_{k} \neq 0\,, \quad \forall\,\ell=(\ell_{k})_{k\in\N} \in (\Z^{N_k})_{k\in\N} \ \ \text{ with } \ \ 0< |\ell|_{\eta} <\infty  \,,
\end{equation}
where, for a fixed $\eta>0$, we define $|\ell|_{\eta} := \sum_{k\in \N} k^\eta |\ell_{k}| $.
Note that $|\ell|_{\eta}< \infty$ implies that $\ell_{k}\neq 0 \in \Z^{N_k}$ only for finitely many $k\in\N$.
\end{defn}

\begin{defn}\label{C1-smooth.def}
	By saying that the map  $U:(\T^{N_k})_{k\in\N}\to \cC_{\rm div}^{s} (\T^d,\R^d)$ is $\cC^1_b$ (${\mathcal C}^1$ and bounded), we mean that, $U$ is a Frechet-differentiable map with continuous Frechet derivative and for any $n\in \{ 0, 1, \ldots, s\}$, there exists a constant $C_n>0$ such that
	\begin{equation}
	\begin{aligned}
		& \sup_{\vartheta \in (\T^{N_k})_{k\in\N}} \|  U(\vartheta) \|_{n,\infty} \leq C_n   \,,  \\
		&	\sup_{\vartheta \in (\T^{N_k})_{k\in\N}} \| \di_{\vartheta} U(\vartheta) [\wh\vartheta] \|_{n,\infty} \leq C_n | \wh\vartheta |_{\infty} \quad \forall\, \wh\vartheta \in (\R^{N_k})_{k\in\N}\,, 
	\end{aligned}
\end{equation}
	where the linear operator $\di_{\vartheta} U(\vartheta) : ( \R^{N_k})_{k\in\N} \to \cC_{\rm div}^{s} (\T^d,\R^d)$ is the Fr\'echet differential of the embedding $U(\vartheta)$ at the point $\vartheta\in (\T^{N_k})_{k\in\N}$. 
\end{defn}
With this definition of a $\cC^1_{b}$ embedding, we have that the function $u(t,\,\cdot\,)$ is $\cC^1$ with respect to $t\in\R$, by \eqref{u(t).as.embedding} and $\pa_{t} u(t,\,\cdot\,) = \di_{\vartheta} U(\theta + \nu t) [\nu] \in \cC_{\rm div}^{s}(\T^d,\R^d)$.

We will look for solutions where the embedding $U$ is \emph{non-symmetric}, or \emph{non-traveling}, in the sense that, for any $\vartheta\in(\T^{N_k})_{k\in\N}$, the divergence-free vector field $U(\vartheta)$ is not invariant under any 1-parameter group of translations on $\T^d$. In this way, we ensure that the solution $u(t,x)$ depends effectively on all $d$ coordinates and we do not have any reduction to solutions of lower dimensions by traveling directions.

The statement of the main result is the following.

\begin{thm}\label{teorema}
	{\bf (Time almost-periodic solutions of the Euler equations).}
	Assume that the dimension $d$ is either 3 or even. Let $S\in \N$ be fixed. There exists $\varepsilon_{0}\in (0,1)$ small enough such that, for any $\varepsilon\in (0,\varepsilon_{0})$ and for any sequence of frequencies $\nu \in (\R^{N_k})_{k\in\N}\setminus\{0\}$ satisfying
	\begin{equation}\label{small.freq}
		\sup_{k\in \N} \varepsilon^{-(S + 1)(k-1)} |\nu_{k}| <\infty \,,
	\end{equation}
	there exists a non-symmetric $\cC^{1}_{b}$ embedding $U:(\T^{N_k})_{k\in\N}\to \cC_{\rm div}^S(\T^d,\R^d)$ and a family of initial data $u_{\theta}\in \cC_{\rm div}^S(\T^d,\R^d)$, $\theta\in (\T^{N_k})_{k\in\N}$, such that $u(t,\,\cdot\,)= U(\theta+\nu t)$, with $u(0,\,\cdot\,)=u_{\theta}$, is a solution of \eqref{euler.eq} with pressure $p(t,\,\cdot\,)= P(\theta+\nu t)$, where
	\begin{equation}
		P(\vartheta) :=(-\Delta)^{-1}\big[ {\rm div} (U(\vartheta)\cdot \nabla U(\vartheta)  )\big] : (\T^{N_k})_{k\in\N} \to \cC^S(\T^d)\,.
 	\end{equation}
	As a consequence, if the sequence $\nu=(\nu_{k})_{k\in\N}$ is non-resonant, namely it satisfies \eqref{nonres}, then the solution $u(t,x)$ is time almost-periodic.  
\end{thm}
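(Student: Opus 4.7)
The proof will follow closely the strategy of Enciso, Peralta-Salas and Torres de Lizaur~\cite{EPsTdL}, extending their quasi-periodic construction (with finitely many frequencies) to an almost-periodic construction with countably many frequencies. The central idea is to write the embedding $U$ as a rapidly convergent superposition of elementary building blocks,
\begin{equation}
U(\vartheta)(x) \;=\; \sum_{k=1}^{\infty} U_k(\vartheta_k)(x),
\end{equation}
where each $U_k(\vartheta_k):\T^{N_k}\to\cC^\infty_{\rm div}(\T^d,\R^d)$ is a divergence-free trigonometric polynomial built from $J_k$ wave vectors lying in a common $m$-dimensional lattice subspace $\Lambda\subset\Z^d$ and amplitude vectors in the orthogonal complement $\Lambda^\perp$, together with a mean-flow component lying in $\Lambda$. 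The count $N_k=(d-m)J_k$ in \eqref{sequence.Nk} is precisely the number of independent phase parameters in this ansatz, and the dimensional hypothesis ($d=3$ or $d$ even) is what guarantees the existence of a lattice pair $(\Lambda,\Lambda^\perp)$ supporting enough independent directions, as in~\cite{EPsTdL}.

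The first step is an algebraic ``unlinking'' identity. Since every wave vector $b\in\Lambda$ is orthogonal to every amplitude vector $e\in\Lambda^\perp$, a direct computation shows that
\begin{equation}
U_k(\vartheta_k)\cdot\nabla U_{k'}(\vartheta_{k'}) \;\in\; \nabla\cC^\infty(\T^d) \qquad\text{for all } k,k',
\end{equation}
so that $U\cdot\nabla U$ is purely a gradient, absorbed by the pressure $P=(-\Delta)^{-1}{\rm div}(U\cdot\nabla U)$. The time-dependence is then engineered by using the mean-flow component to advect each wave: matching $\nu$ with the mean-flow components via an advection relation of the schematic form $\nu_k\cdot\partial_{\vartheta_k}U_k + U_{\rm mean}\cdot\nabla U_k = 0$ reduces the Euler equation for $u(t,x)=U(\theta+\nu t)(x)$ to the identity just stated, so that $U$ automatically solves \eqref{euler.eq} with the prescribed pressure.

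For convergence I would estimate $\|U_k\|_{n,\infty}$ geometrically in $k$: the wave vectors at level $k$ can be chosen of size $\lesssim k$, so that each derivative costs a factor $k$ and the cumulative loss is $\lesssim k^S$. Choosing the amplitudes of $U_k$ bounded by a sufficiently high power of $\varepsilon$ at level $k$ --- a choice compatible with~\eqref{small.freq} via the mean-flow/frequency relation --- yields absolute convergence of $\sum_k\|U_k\|_{S,\infty}$ and of the corresponding Fr\'echet-derivative series, which produces the $\cC^1_b$ embedding in the sense of Definition~\ref{C1-smooth.def}. The non-symmetry requirement is arranged by picking the wave vectors at the first few levels so that they span $\R^d$, which rules out any non-trivial translation of $\T^d$ preserving $U$; time-almost-periodicity of $u(t,x)$ under~\eqref{nonres} is then immediate. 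The main obstacle I expect is precisely this uniform convergence step: the algebraic identities are essentially already present in~\cite{EPsTdL}, but propagating the $\cC^1_b$ bounds uniformly over the infinite-dimensional parameter $\vartheta\in(\T^{N_k})_{k\in\N}$ is what makes the geometric smallness~\eqref{small.freq}, tuned precisely to the regularity exponent $S$, indispensable.
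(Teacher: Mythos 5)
Your proposal describes a construction that is essentially unrelated to the one in the paper (and, importantly, also unrelated to the one in \cite{EPsTdL}, which you cite as your template). The paper does not use divergence-free trigonometric polynomials, lattice subspaces $\Lambda\subset\Z^d$, or any algebraic ``unlinking'' identity forcing $U_k\cdot\nabla U_{k'}$ to be a gradient. Its building block is the smooth, \emph{compactly supported} stationary Euler flow of Gravilov (and its even-dimensional analogue from \cite{EPsTdL}), which is the whole source of the hypothesis that $d=3$ or $d$ even. Each $v_k$ is the rescaling $v_k(x)=\varepsilon^{(S+1)(k-1)}v(\varepsilon^{-k}x)$ of this single stationary flow, then periodicized and translated to points $\ty_{k,j}\in\T^m$ that are chosen so that all the cylinders $B_{m,2\varepsilon^k}(\ty_{k,j})\times\T^{d-m}$ are \emph{pairwise disjoint}. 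The nonlinear cross-terms $u_k\cdot\nabla u_{k'}$ for $k\neq k'$ vanish identically because the supports are disjoint --- not because they are gradients. The time dependence is created by a shear component $w_k(x)=(0,F_k(x'))$ built from cut-off functions $\chi_k$ that make $w_k$ \emph{locally constant} equal to $(0,\nu_{k,j})$ on the support of $\overline{v_{k,j}}$, so that the advection $w_k\cdot\nabla\overline{v_{k,j}}$ exactly cancels $\partial_t\overline{v_{k,j}}$. None of this is captured by your lattice/Fourier picture.

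Beyond the mismatch with the paper, the proposal has gaps that would need to be closed on its own terms. First, you attribute the dimensional restriction to the existence of a lattice pair $(\Lambda,\Lambda^\perp)$, but the orthogonal splitting $\Z^d=\Lambda\oplus\Lambda^\perp$ used for Beltrami-type ``unlinked'' trigonometric solutions exists in \emph{every} dimension $d\geq 2$, so this cannot be the origin of the $d=3$ or $d$ even restriction; that restriction genuinely comes from Gravilov's theorem about compactly supported stationary flows. Second, your claimed estimate --- wave vectors of size $\lesssim k$, derivatives costing a factor $k$, cumulative loss $\lesssim k^S$ --- is the wrong scale. In the actual construction, $v_k$ is concentrated at length scale $\varepsilon^k$, so each derivative costs $\varepsilon^{-k}$, i.e.\ \emph{exponentially} in $k$, and this is precisely why the amplitude prefactor $\varepsilon^{(S+1)(k-1)}$ and the frequency decay \eqref{small.freq} are tuned to the regularity exponent $S$: the seminorms $\|v_k\|_{n,\infty}\lesssim\varepsilon^{k(S+1-n)-S-1}$ are bounded uniformly in $k$ only for $n\leq S+1$. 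A polynomial-in-$k$ loss would make the convergence step trivial and would never produce the sharp regularity threshold appearing in the theorem. Third, in a trigonometric-polynomial ansatz the mean-flow term cannot be made locally constant on the support of each wave (the waves have full support), so the cross-term $v\cdot\nabla w$ is not zero and you would have to show it is absorbed by the pressure, which you do not do; the paper avoids this entirely because $w_k\equiv(0,\nu_{k,j})$ on $\spt(\overline{v_{k,j}})$. So this is not a valid alternative route, and I would recommend re-reading \cite{EPsTdL}: the construction there is a localized, Gravilov-based one, not a Fourier/lattice one.
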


\begin{rem}
	As it will be clear from the construction in the following section, the embedding $U(\vartheta)$ is determined as a combination of infinitely many embedding $U_k(\vartheta_{k})$, with $\vartheta_{k}\in \T^{N_k}$, which coincides with the embedding constructed in \cite{EPsTdL}, with the size of the embedding $U_{k}$ becoming smaller and smaller  as $k\to \infty$. The major difference in the analysis with respect to \cite{EPsTdL} is that we have to effectively prove the smoothness of the embedding and the regularity of the vector field. This is not trivial. 
\end{rem}

\begin{rem}
	The condition \eqref{nonres} of irrationality for the sequence of frequencies $\nu\in(\R^{N_k})_{k\in\N}\setminus \{0\}$ is not necessary in the construction of the embedding $U$. Depending on relations between all the frequencies, we may obtain embedding for lower dimensional tori, either finite dimensional (quasi-periodic or periodic) or still infinite dimensional (that is, almost-periodic). On the other hand, the control on the frequency vectors in \eqref{small.freq} is required to ensure that the solution $u(t,x)$ is indeed a finitely smooth vector field and a simpler control on the norm $|\nu|_\infty$ is not enough. At the physical level, is also implies that we obtain solutions whose leading order frequencies of oscillations are only finitely many and the almost-periodicity in time is due to the presence of infinitely oscillations with smaller and smaller frequencies.
\end{rem}

\paragraph{Related results.} 
In the last years, there has been a discrete surge of works proving the existence of time quasi-periodic waves for PDEs arising in fluid dynamics.
With the exception of the aforementioned works \cite{CF} and \cite{EPsTdL}, there type of results in literature are proved by means of KAM for PDEs techniques, to deal with the presence of small divisors issues and consequent losses of regularity.
For the two dimensional water waves equations, we mention
Berti \& Montalto \cite{BM}, Baldi, Berti, Haus \& Montalto \cite{BBHM} for time quasi-periodic standing waves and Berti, Franzoi \& Maspero \cite{BFM},
\cite{BFM21}, Feola \& Giuliani \cite{FeoGiu} for time quasi-periodic traveling wave solutions. 
Recently, the existence of time quasi-periodic solutions was proved for the contour dynamics of vortex patches in active scalar equations. 
We mention Berti, Hassainia \& Masmoudi \cite{BertiHassMasm} for vortex patches of
the Euler equations close to Kirchhoff ellipses, Hmidi \& Roulley \cite{HmRo} for the 
quasi-geostrophic shallow water equations,  Hassainia, Hmidi \& Masmoudi \cite{HaHmMa} for generalized surface quasi-geostrophic equations, Roulley \cite{Roulley} for Euler-$\alpha$ flows, Hassainia \& Roulley \cite{HaRo} for Euler equations in the unit disk close to Rankine vortices and Hassainia, Hmidi \& Roulley \cite{HaHmRou} for 2D Euler annular vortex patches.
Time quasi-periodic solutions were also constructed for the 3D Euler equations with time quasi-periodic external force \cite{BM20} and for the forced 2D Navier-Stokes
equations \cite{FrMo} approaching in the zero viscosity limit time quasi-periodic solutions of the 2D Euler equations for all times. 

The existence of other non-trivial invariant structures is also a topic of interest in  fluid dynamics. In particular, for the Euler equations in two dimension close to shear flows, we mention the works by Lin \& Zeng \cite{LZ} and Castro \& Lear \cite{CL21} for periodic traveling waves close the Couette flow,  by Coti Zelati, Elgindi \& Widmayer \cite{CzEW} for stationary waves around non-monotone shears, by  Franzoi, Masmoudi \& Montalto \cite{FrMaMo} for quasi-periodic traveling waves close to the Couette flow, and the recent work by Castro \& Lear \cite{CL23} for time periodic rotating solutions close to the Taylor-Couette flow.

Concerning the existence of almost periodic solutions by means of KAM methods, we mention P\"oschel \cite{PoeschelAP}, Bourgain \cite{Bou04}, Biasco-Massetti-Procesi \cite{MaProBia}, \cite{BMP2} and Corsi-Gentile-Procesi \cite{CGP}. In all these results the authors consider semilinear NLS type equations with external parameters. For PDEs with unbounded perturbations (with external parameters as well) we mention Montalto-Procesi \cite{MoPr021} and Corsi-Montalto-Procesi \cite{CoMoPro}.

\noindent
We remark that our result is the first one concerning existence of almost-periodic solutions for an autonomous quasi-linear PDEs in higher space dimension and it is obtained with non-KAM techniques. 

\bigskip

\medskip

\noindent
{\bf Acknowledgments. }
	The work of the authors Luca Franzoi and Riccardo Montalto is funded by the European Union, ERC STARTING GRANT 2021, "Hamiltonian Dynamics, Normal Forms and Water Waves" (HamDyWWa), Project Number: 101039762. Views and opinions expressed are however those of the authors only and do not necessarily reflect those of the European Union or the European Research Council. Neither the European Union nor the granting authority can be held responsible for them. 

\noindent
The work of the author Riccardo Montalto is also supported by PRIN 2022 "Turbulent effects vs Stability in Equations from Oceanography", project number: 2022HSSYPN. 

\noindent
Riccardo Montalto is also supported by INDAM-GNFM. 

\bigskip
\medskip

\noindent
{\bf Notations.}
In this paper, we use the following notations:
\\[1mm]
\noindent $\bullet$ $B_{d,\rho}(\tp) := \{ x\in\R^d \,:\, |x-\tp| < \rho \}$, with $\tp \in \R^d$ and $\rho >0$;
\\[1mm]
\noindent $\bullet$ $\cC^{s}(\T^{m_1},\R^{m_2}):= \big\{ f:\R^{m_1}\to \R^{m_2} \,:\, \|f \|_{n,\infty} <\infty \ \ \forall\, n\in \N \cup \{ 0 \}, \, n\leq s  \big\}$, $s \in \N \cup \{\infty\}$;
\\[1mm]
\noindent $\bullet$ $\cC^{\infty}(X,\R) := \cC^{\infty}(X)$, with $X=\T^d, \R^d$;
\\[1mm] 
\noindent $\bullet$ $a \lesssim b$ stands for $a \leq C b$, for some constant $C>0$;
\\[1mm] 
\noindent $\bullet$ $a \lesssim_{n} b$ stands for $a \leq C_{n} b$, for some constant $C_n>0$ depending on $n$.

\section{Proof of Theorem \ref{teorema}}

The scheme follows essentially the one proposed in \cite{EPsTdL}, with the required adaptations. The key starting point is the existence of smooth, compactly supported stationary solutions of the Euler equations. In $d=3$, this is celebrated result by Gravilov \cite{gravilov} (see also \cite{CLV}), whereas in even dimension it has been proved in \cite{EPsTdL}. We recall the statement of the result of the latter.

\begin{prop}\label{gravi.sol}
	{\bf (Smooth stationary Euler flows with compact support - Proposition 2, \cite{EPsTdL}).}
	If $d=3$ or $d\in\N$ is even, there exists a smooth, compactly supported solution $v(x)\in \cC_{\rm div}^\infty(\R^d,\R^d) $, with pressure $p_{v}(x)\in \cC^\infty(\R^d)$, of
	\begin{equation}\label{euler.stat}
		v\cdot \nabla v + \nabla p_{v} = 0 \,, \quad {\rm div} \,v = 0\,, \quad x \in \R^d\,.
	\end{equation}
\end{prop}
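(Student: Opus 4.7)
The plan is to treat the cases $d=3$ and even $d$ separately, as in the two cited works. In both cases the starting point is to rewrite the stationary Euler equation in Bernoulli form: using $v\cdot \nabla v = (\nabla \times v)\times v + \tfrac12 \nabla |v|^2$, the equation becomes $(\nabla \times v)\times v = - \nabla B$ with $B := p_v + \tfrac12 |v|^2$. A natural sufficient condition for a solution is the Beltrami-type ansatz $(\nabla \times v)\times v \equiv 0$, i.e.\ that $\nabla \times v$ and $v$ be parallel; then $B$ is a constant (which can be chosen so as to match the trivial solution outside the support), eliminating the pressure from the construction and reducing the problem to a nonlinear eigenvalue-type equation for $v$ alone.

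For $d=3$ I would follow Gavrilov's original construction \cite{gravilov}. Work in cylindrical coordinates $(r,\theta,z)$ and look for an axisymmetric flow with swirl: parametrize $v$ in terms of a Stokes stream function $\psi(r,z)$ and a swirl function $\Phi(r,z)$. Both the divergence-free condition and the Beltrami condition $\nabla \times v = \lambda v$ then reduce to a Grad--Shafranov-type scalar PDE relating $\psi$, $\Phi$ and the proportionality factor $\lambda$. The core analytical step is to produce a solution of this PDE supported in a thin tubular neighborhood of a circle $\{r=r_0, z=0\}$ and extend by zero. Compact support amounts to matching the solution and all its derivatives to zero on the boundary of the tube; Gavrilov achieves this by a careful perturbative analysis in the thickness parameter of the tube, solving a hierarchy of model problems and producing a smooth flow supported in a solid torus.

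For even $d=2n$ I would follow the construction in \cite{EPsTdL}. Here the idea is to exploit the abundant symmetry of even-dimensional Euclidean space: the existence of a compatible symplectic (or complex) structure allows one to consider solutions equivariant under a suitable torus action on $\R^{2n}$. Symmetry reduction then lowers the effective dimension of the Beltrami/Euler problem and, via an appropriate lift of the 3D Gavrilov solution or of a 2D stream-function ansatz, produces a smooth compactly supported divergence-free vector field on $\R^{2n}$ solving the stationary Euler equation. The associated pressure is recovered from the Bernoulli identity as $p_v = -\tfrac12 |v|^2 + \text{const}$, and it is smooth and compactly supported by construction.

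The main obstacle in either case is the well-known rigidity of stationary Euler flows: generic stationary solutions propagate information to infinity along streamlines and cannot be localized. Both Gavrilov's and EPsTdL's arguments circumvent this rigidity by imposing enough symmetry (axisymmetry in $d=3$, a torus action in even $d$) to reduce the PDE to a lower-dimensional problem in which compact support is genuinely available, and then by a delicate matching/perturbation argument to ensure smoothness up to and beyond the boundary of the support. For our purposes the proposition can be taken as a black-box input from \cite{gravilov,EPsTdL}, providing the smooth compactly supported seed solution $v$ and pressure $p_v$ that will be rescaled and superposed in the construction of the almost-periodic embedding.
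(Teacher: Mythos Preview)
The paper itself gives no proof of this proposition: it is stated and immediately attributed to Gavrilov \cite{gravilov} (for $d=3$) and to \cite{EPsTdL} (for even $d$), and used thereafter as a black box. Your final sentence --- take it as a black-box input --- is therefore exactly what the paper does, and that alone would suffice.

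That said, the sketch you give of the $d=3$ argument contains a substantive error. Gavrilov's construction is \emph{not} based on the Beltrami ansatz $(\nabla\times v)\times v=0$. In fact a nontrivial Beltrami field $\nabla\times v=\lambda v$ with constant $\lambda\neq 0$ satisfies the Helmholtz equation $\Delta v+\lambda^2 v=0$ and hence cannot have compact support; and with your conclusion $p_v=-\tfrac12|v|^2$, the pressure would be sign-definite, which is also incompatible with the structure of Gavrilov's example. The actual mechanism is different: in the axisymmetric ansatz one assumes that both the swirl and the pressure are functions of the stream function $\psi$ alone, which leads to a Grad--Shafranov equation for $\psi$ with nonconstant Bernoulli head. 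Gavrilov's key discovery is a choice of the profile functions for which the resulting $(v,p_v)$ is \emph{localizable}, meaning $v\cdot\nabla p_v=0$: the pressure is constant on stream surfaces, so one can multiply by a cutoff depending only on $p_v$ and still solve the equation exactly, yielding compact support. Your description of the even-dimensional case is also somewhat off (no lift of the 3D solution is used; the construction in \cite{EPsTdL} is more direct), but since the paper cites the result without reproducing it, this does not affect the present manuscript.
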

 Without any loss of generality, we assume that $\spt(v), \spt(p_v) \subseteq B_{d,1}(0) \subset \R^d$. Then, given $S\in\N$  and for any $k\in\N$, we define the rescaled functions, for any $\varepsilon\in (0,1)$ small enough,
 \begin{equation}
 	v_k(x) := \varepsilon^{(S + 1)(k-1)} v(\varepsilon^{-k}x) \,,\quad p_{v_{k}}(x) := \varepsilon^{2(S + 1)(k-1)} p_{v}(\varepsilon^{-k}x) \,.
 \end{equation}
 A straightforward computation shows that $v_{k}(x)\in \cC_{\rm div}^\infty(\R^d,\R^d) $ is also a solution of \eqref{euler.stat} with pressure $p_{v_{k}}(x)\in \cC^\infty(\R^d)$ with compact support
\begin{equation}
	\spt (v_{k}), \spt(p_{v_k}) \subseteq B_{d,\varepsilon^{k}}(0) \subset \R^d\,,
\end{equation}
and we have the control on the seminorms, for any $n\in\N_0$,
\begin{equation}\label{rescaled.esti}
	\|v_{k}\|_{n,\infty}  \leq C_n \varepsilon^{k(S + 1-n)-S - 1} \,, \quad \|p_{v_{k}}\|_{n,\infty} \leq C_n \varepsilon^{k(2S + 2-n)-2S - 2} \,,
\end{equation}
for some constant $C_n>0$ independent of $\varepsilon \in (0,1)$ and $k\in\N$.
We remark that, as soon as $n> S + 1$, the seminorms $\|v_{k}\|_{n,\infty}$ start to diverge with respect to $k\to\infty$ as $\varepsilon^{-(n-S - 1)k}$ for $\varepsilon\in (0,1)$, whereas the seminorms $\|p_{v_{k}}\|_{n,\infty}$ start to diverge when $n>2S + 2$. 

Moreover, for $\varepsilon\in (0,1)$ small enough and independent of $k\in\N$, we define the periodicized versions
\begin{equation}
	\overline{v_{k}} (x) := \sum_{q\in \Z^d} v_{k}(x+ 2\pi q) \,, \quad 	\overline{p_{v_{k}}} (x) := \sum_{q\in \Z^d} p_{v_{k}}(x+ 2\pi q)\,, \quad x \in \T^d\,.
\end{equation}

We recall the sequence $(N_k)_{k\in\N}\in\ell^\infty(\N,\N)$ in \eqref{sequence.Nk} is determined by a fixed $m\in \{1,...,d-1\}$ and a fixed sequence $(J_k)_{k\in\N}\in \ell^\infty(\N,\N)$.
The choice of $m \in \{1,...,d-1\}$ induces the splitting $\T^d = \T^m \times \T^{d-m}$ and we write
\begin{equation}
	\T^d \ni x = (x',x'')  \in\T^m \times \T^{d-m}\,,\quad  \nabla=(\nabla',\nabla'') := (\nabla_{x'},\nabla_{x''}) \,.
\end{equation}
We select a sequence of points $(\ty_{k,j})_{k\in\N, \, j=1,..,J_k}\subset \T^m$ with the properties that:
\\[1mm]
\noindent {\bf (A)} For any $k_1,k_2\in\N$, $j_1=1,...,J_{k_1}$, $j_2=1,...,J_{k_2}$, with $(k_1,j_1)\neq (k_2,j_2)$, we have
\begin{equation}
	\overline{B_{m,2\varepsilon^{k_1}}(\ty_{k_1,j_1})} \cap \overline{B_{m,2\varepsilon^{k_2}}(\ty_{k_2,j_2})} = \emptyset \,;
\end{equation}
\noindent {\bf (B)} We have $\big|\T^m \setminus \big( \bigcup_{k\in\N} \bigcup_{j=1}^{J_k} \overline{B_{m,2\varepsilon^{k}}(\ty_{k,j})} \big)\big|\geq \frac23 |\T^m|>0$.
\\[1mm]
\indent The existence of such sequence of points with these desired properties is proved in the following lemma.
\begin{lem}
	There exist $\varepsilon_{0}=\varepsilon_{0}\big(m,\| (J_k) \|_{\ell^\infty}\big)\in (0,1)$ small enough and a choice of infinitely many distinct points $(\ty_{k,j})_{k\in\N, \, j=1,..,J_k}\subset \T^m$, such that the following holds. For $\varepsilon>0$, we define iteratively the sets
	\begin{equation}\label{E.sets}
		E_0:= \emptyset \,, \quad E_k :=  E_{k-1} \cup \bigcup_{j=1}^{J_k}\overline{ B_{m,2\varepsilon^{k}}(\ty_{k,j}) } \,, \quad k \in \N \,.
	\end{equation}
	Then, for any $\varepsilon\in (0,\varepsilon_{0})$ and for any $k\in\N$, we have:
	\\[1mm]
	\noindent $(i)$ $E_{k-1} \cap \bigcup_{j=1}^{J_k} \overline{ B_{m,2\varepsilon^{k}}(\ty_{k,j})} =\emptyset$
	\\[1mm]
	\noindent $(ii)$ $ \overline{B_{m,2\varepsilon^{k}}(\ty_{k_1,j_1})} \cap \overline{B_{m,2\varepsilon^{k}}(\ty_{k_2,j_2})} = \emptyset $ for any $j_1,j_2 =1,...,J_k$;
	\\[1mm]
	\noindent $(iii)$ $\T^m \setminus E_k $ is open and $|\T^m \setminus  E_{k} | \geq \Big( 1- \sum_{n=1}^{k} 4^{-n}  \Big) |\T^m| $.
	\\[1mm]
	As a consequence, conditions {\bf (A)} and {\bf (B)} are satisfied.
\end{lem}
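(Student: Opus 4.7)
The plan is to construct the points $(\ty_{k,j})$ inductively on $k$, at each stage placing the $J_k$ new centers sequentially by a greedy packing argument inside the open set $\T^m\setminus E_{k-1}$, and verifying $(i)$--$(iii)$ along the way; conditions (A) and (B) will then drop out as cosmetic consequences. Write $c_m$ for the volume of the Euclidean unit ball in $\R^m$.

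For the base case $k=1$ with $E_0=\emptyset$, I would select the $J_1$ centers one after the other: after having chosen $\ty_{1,1},\ldots,\ty_{1,i}$, the forbidden region for the next center is $\bigcup_{j=1}^{i}\overline{B_{m,4\varepsilon}(\ty_{1,j})}$, of measure $\leq J_1 c_m(4\varepsilon)^m$, which is strictly below $|\T^m|$ once $\varepsilon_0$ is small in terms of $m$ and $\|(J_k)\|_{\ell^\infty}$. For the inductive step, assume $(i)$--$(iii)$ hold up to $k-1$, so $E_{k-1}$ is a finite union of closed balls and $\T^m\setminus E_{k-1}$ is open. After having placed $\ty_{k,1},\ldots,\ty_{k,i}$, the forbidden region for the next center is
\begin{equation*}
F_{k,i}:=\{y\in\T^m:\dist(y,E_{k-1})\leq 2\varepsilon^k\}\cup\bigcup_{j=1}^{i}\overline{B_{m,4\varepsilon^k}(\ty_{k,j})}.
\end{equation*}
Since $\varepsilon^k\leq\varepsilon^n$ for every $n\leq k-1$, the first set is contained in $\bigcup_{n=1}^{k-1}\bigcup_{j=1}^{J_n}\overline{B_{m,4\varepsilon^n}(\ty_{n,j})}$, of measure at most $c_m 4^m\|(J_n)\|_{\ell^\infty}\sum_{n=1}^{k-1}\varepsilon^{mn}$, while the second has measure at most $J_k c_m(4\varepsilon^k)^m$. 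Both pieces are dominated by a constant multiple of $\varepsilon^m$, so for $\varepsilon_0$ small (depending only on $m$ and $\|(J_k)\|_{\ell^\infty}$) the total forbidden measure stays strictly below $|\T^m|$ and a valid $\ty_{k,i+1}$ exists. Properties $(i)$ and $(ii)$ follow directly: the $2\varepsilon^k$-enlargement in the definition of $F_{k,i}$ guarantees disjointness from $E_{k-1}$, while the $4\varepsilon^k$-balls around the new centers enforce pairwise disjointness at generation $k$.

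For $(iii)$, a direct estimate gives $|E_k\setminus E_{k-1}|\leq J_k c_m(2\varepsilon^k)^m=c_m 2^m J_k(4\varepsilon^m)^k\cdot 4^{-k}$, so imposing $4\varepsilon^m\leq 1$ and $c_m 2^{m+2}\|(J_n)\|_{\ell^\infty}\varepsilon^m\leq|\T^m|$ (both handled by a final shrinking of $\varepsilon_0$ depending only on $m$ and $\|(J_n)\|_{\ell^\infty}$) forces $|E_k\setminus E_{k-1}|\leq 4^{-k}|\T^m|$; openness of $\T^m\setminus E_k$ is preserved by construction. Finally, (A) is immediate from $(i)$ applied at the larger of the two generations combined with $(ii)$ at equal generations, and (B) is immediate from $(iii)$ together with $\sum_{n\geq 1}4^{-n}=1/3$. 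The only genuine technical point, and the one I expect to require the most care, is making sure that $\varepsilon_0$ can be chosen once and for all independently of $k$: the geometric decay of the radii $2\varepsilon^k$ versus the uniform bound on $J_k$ is precisely the mechanism that makes a $k$-independent choice possible.
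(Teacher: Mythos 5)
Your proof is correct and follows the same inductive strategy as the paper: both construct the points generation by generation, derive $(i)$--$(iii)$ from measure estimates with an $\varepsilon_0$ depending only on $m$ and $\|(J_k)\|_{\ell^\infty}$, and then deduce (A) and (B). Your greedy argument (placing centers one at a time and bounding the measure of an explicit forbidden region, including the $2\varepsilon^k$-neighborhood of $E_{k-1}$ and the $4\varepsilon^k$-balls around already-placed centers) is a more careful spelling-out of the packing step that the paper handles more briefly by simply comparing the total measure of the new balls with the remaining room; the two arguments are otherwise the same.
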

\begin{proof}
	In the following, we use that $|B_{m,2r}(\ty) | = C_{m} r^{m} |\T^m| $, where the explicit constant $C_m := \big( \pi^{m/2} \Gamma(\tfrac{m}{2}+1) \big)^{-1}\in (0,1)$ depends only on the dimension $m\in\N$, where $\Gamma$ is the Euler Gamma function.
	
	We argue by induction. Let $k=1$.
	We pick an arbitrary choice of distinct points $\ty_{1,1},...,\ty_{1,J_1} \in \T^m$ and, for $\varepsilon>0$, we define the set $E_1 := \bigcup_{j=1}^{J_1} \overline{B_{m,2\varepsilon}(\ty_{1,j})}$. By \eqref{E.sets}, item $(i)$ is automatically satisfied for $k=1$. We define $\varepsilon_{1,1}:= \tfrac14 \min \{|\ty_{1,j_1}-\ty_{1,j_2}| \,:\, 1\leq j_1 < j_2\leq J_1  \}$. Then, for any $\varepsilon \in (0,\varepsilon_{1,1})$, the closed balls $\big(\overline{B_{m,2\varepsilon}(\ty_{1,j})}\big)_{j=1,...,J_1}$ are pairwise disjoint, that is, item $(ii)$ is satisfied when $k=1$. Clearly, we also have that $\T^m \setminus E_1$ is open, since $E_1$ is a finite union of closed sets. Moreover, using that $C_m\in(0,1)$, we compute
	\begin{equation}
		|\T^m \setminus E_1| =(1 - C_m J_1 \varepsilon^m ) |\T^m| \geq (1 - J_1 \varepsilon^m ) |\T^m|\geq  \tfrac34 |\T^m| \,,
	\end{equation}
	as soon as $\varepsilon<\varepsilon_{1,2} :=  \big( 4 J_1 \big)^{-1/m}$. Therefore, choosing $\varepsilon_{0} \leq \min\{\varepsilon_{1,1},\varepsilon_{1,2} \}$, we conclude that $(i)$, $(ii)$ and $(iii)$ are satisfied for $k=1$.
	
	We now assume that the claims  $(i)$, $(ii)$ and $(iii)$ are satisfied for some $k\in\N$ and we prove them for $k+1$. We set
	\begin{equation}\label{scelta.eps0}
		\varepsilon_{0} := \min \{ \varepsilon_{1,1} , (4  \| (J_k) \|_{\ell^\infty})^{-1/m} \} \leq \min \{ \varepsilon_{1,1},\varepsilon_{1,2}\} \,.
	\end{equation}
	By \eqref{scelta.eps0}, there exist $J_{k+1}$ distinct points $\ty_{k+1,1},...,\ty_{k+1,J_{k+1}} \in \T^m \setminus E_{k}$,  with  $J_{k+1} \leq \| (J_k) \|_{\ell^\infty}$,  such that, for any $\varepsilon \in (0,\varepsilon_{0})$ we have that the $J_{k+1}$ balls $\overline{B_{m,2\varepsilon^{k+1}}(\ty_{k+1,1})}$,  ..., $\overline{B_{m,2\varepsilon^{k+1}}(\ty_{k+1,J_{k+1}})}$ are contained in $\T^m \setminus E_k$ and they are disjoint, namely they satisfy items $(i)$ and $(ii)$ at the step $k+1$. This follows from the fact that, by the induction assumption on $(iii)$, we have that $\T^m \setminus E_k$ is open with measure $| \T^m \setminus E_k| > \tfrac23 |\T^m|$, whereas the measure of the finite union of closed disjoint balls is estimated, for any $\varepsilon \in (0,\varepsilon_{0})$ with $\varepsilon_{0}$ as in \eqref{scelta.eps0}, by
	\begin{equation}\label{measure.balls}
		\begin{aligned}
			\Big|   \bigcup_{j=1}^{J_{k+1}} \overline{B_{m,2\varepsilon^{k+1}}(\ty_{k+1,j})}\, \Big| & = C_m J_{k+1} \varepsilon^{(k+1)m} |\T^m| \\
			&\leq  \frac{1}{4^{k+1}} \frac{J_{k+1}}{\| (J_k) \|_{\ell^\infty}^{k+1}} |\T^m| \leq  \frac{1}{4^{k+1}} |\T^m| < \frac23 |\T^m| \,,
		\end{aligned}
	\end{equation}
	which implies the existence of the $J_{k+1}$ points $\ty_{k+1,1},...,\ty_{k+1,J_{k+1}}$ in the open and bounded set $\T^m\setminus E_{k}$ with the desired properties. Therefore, let $E_{k+1}$ be defined as in \eqref{E.sets}. Clearly, $E_{k+1}$ is closed, which also implies that $\T^m\setminus E_{k+1}$ is open. By \eqref{measure.balls} and item $(ii)$ at the step $k+1$, we also deduce that,
	\begin{equation}
		|\T^m \setminus E_{k+1} | = |\T^m \setminus E_{k}| - 	\Big|   \bigcup_{j=1}^{J_{k+1}} \overline{B_{m,2\varepsilon^{k+1}}(\ty_{k+1,j})}\, \Big| \geq \Big( 1 - \sum_{n=1}^k 4^{-n} - 4^{-(k+1)} \Big)|\T^m|  \,,
	\end{equation}
	which is indeed the estimate in item $(iii)$ at the step $k+1$. This closes the induction argument and concludes the proof.
\end{proof}

As a last preliminary, we take a sequence of frequency vectors $\nu = (\nu_{k})_{k\in\N}\in (\R^{N_k})_{k\in\N}$, where  $\nu_{k} = (\nu_{k,1},...,\nu_{k,J_k}) \in \R^{N_k}$, with $\nu_{k,j}\in\R^{d-m}$, recalling \eqref{sequence.Nk}.
We now define the vector field
\begin{equation}\label{solution.u}
	u(t,x) := \sum_{k=1}^\infty u_{k}(t,x) \,, \quad u_k(t,x):= \sum_{j=1}^{J_k} \overline{v_{k,j}}(t,x) + w_k(x)
\end{equation}
with pressure
\begin{equation}\label{pressure.u}
	p_{u}(t,x) :=  \sum_{k=1}^\infty p_{u_{k}}(t,x) \,, \quad p_{u_{k}}(t,x):= \sum_{j=1}^{J_k} \overline{p_{k,j}}(t,x) \,,
\end{equation}
where
\begin{equation}\label{vkj}
	\begin{aligned}
		\overline{v_{k,j}}(t,x) &:= \overline{v_{k}} (x'-\ty_{k,j}, x'' - \nu_{k,j}t)\,, \\ \overline{p_{k,j}}(t,x) & := \overline{p_{v_{k}}} (x'-\ty_{k,j}, x'' - \nu_{k,j}t)\,,
	\end{aligned}
\end{equation}
and
\begin{equation}\label{w.vf}
	w_k(x) = (0,F_k(x')) \,, \quad F_{k} : \T^m \to \R^{d-m}\,.
\end{equation}
Note that, no matter the choice of $F_k(x')$ sufficiently smooth is, the vector field $w_k(x)$ is a stationary solution with constant pressure of the Euler equations \eqref{euler.eq}, namely we have
\begin{equation}\label{wk.eq}
	w_k \cdot \nabla w_k = 0 \,, \quad {\rm div}\,w_k = 0 \,.
\end{equation}
To make sure that $u(t,x)$ is indeed a solution of \eqref{euler.eq}, we need to specify the functions $F_k(x')$. In particular, we choose
\begin{equation}\label{Fk}
	F_k(x') := \sum_{j=1}^{J_k} \nu_{k,j} \,\chi_{k} (|x'-\ty_{k,j}|) \,,
\end{equation}
where $\chi_{k}(r) \in \cC^{\infty}(\R)$ is an even cut-off function satisfying
\begin{equation}\label{cutoff.k}
\begin{aligned}
	& \chi_{k}(r) = 1 \quad \text{when} \ \ |r|< \varepsilon^k\,, \quad 	\chi_{k}(r) = 0 \quad \text{when} \ \ |r|> 2 \varepsilon^k\,, \quad \chi_{k} \in [0,1] \,, \\
	& |\partial_r^n \chi_k(r) | \leq C_n \varepsilon^{- k n}, \quad \forall \, r \in \R, \quad n \in \N \cup \{ 0 \}\,,
	\end{aligned}
\end{equation}
for some constant $C_n > 0$. For each $k\in\N$, we have that $F_k \in \cC^\infty(\T^m,\R^{d-m})$ and that the vector field $w_k(x)$ is locally equal to $(0,\nu_{k,j})$ when $x\in \spt(\overline{v_{k,j}})$. Note that each pair $(u_{k}(t,x),p_{u_k}(t,x))$ defined above by \eqref{solution.u}-\eqref{Fk} has actually the form of a quasi-periodic solution of the Euler equations \eqref{euler.eq}  as provided in \cite{EPsTdL}, which has been reproduced here on supports of scale $\varepsilon^{k}$. Moreover, by construction and by {\bf (A)}, the support in space of $(u_{k}(t,x),p_{u_k}(t,x))$ is in $\big(\bigcup_{j=1}^{J_k} B_{m,2\varepsilon^k}(\ty_{k,j}) \big)\times \T^{d-m}$ and it is disjoint from the one of $(u_{k'}(t,x),p_{u_{k'}}(t,x))$ for any $k'\neq k$. We use these properties to check that the pair $(u(t,x),p_{u}(t,x))$  in \eqref{solution.u}-\eqref{pressure.u} is indeed a solution of \eqref{euler.eq} as well.

First, we prove that each pair $(u_k(t,x), p_{u_k}(t,x))$ is a solution of \eqref{euler.eq} and we provide estimates on the seminorms.

\begin{lem}\label{lemma.uk}
	Assume that $\nu = (\nu_{k})_{k\in\N}$ satisfies \eqref{small.freq}.
	For each $k\in\N$, the vector field $u_k(t,x) $ is in $ \cC_{\rm div}^{\infty}(\T^d,\R^{d})$, with pressure $p_{u_{k}}(t,x)$ in $\cC^\infty(\T^d)$, is a solution of the Euler equations \eqref{euler.eq}, namely
	\begin{equation}\label{euler.uk}
		\pa_{t} u_k + u_k \cdot \nabla u_k + \nabla p_{u_k} = 0 \,, \quad {\rm div }\, u_k = 0\,,
	\end{equation}
	compactly supported in space in $\bigcup_{j=1}^{J_k} B_{m,2\varepsilon^k}(\ty_{k,j}) \times \T^{d-m}$. Moreover, we have the estimates, for any integer $n \geq 0$,
\begin{equation}\label{uk.esti}
	\begin{aligned}
		\sup_{t\in\R} \|u_k(t,\cdot\,)\|_{n,\infty} & \leq C_n \varepsilon^{k(S + 1-n)-S - 1} \,, \\
		\sup_{t\in\R} \|\pa_{t}u_k(t,\cdot\,)\|_{n,\infty} & \leq C_n \varepsilon^{k(2S + 2-(n+1))-2S - 2} \,, \\
		 \sup_{t\in\R} \|p_{u_k}(t,\cdot\,)\|_{n,\infty}& \leq C_n \varepsilon^{k(2S+ 2-n)-2S - 2} \,,
	\end{aligned}
\end{equation}
	for some constant $C_n>0$ independent of $\varepsilon\in (0,1)$ and of $k\in\N$.
\end{lem}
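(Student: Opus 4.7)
The plan is to decompose $u_k = V_k + w_k$ with $V_k(t,x) := \sum_{j=1}^{J_k} \overline{v_{k,j}}(t,x)$, and to exploit two structural features: (a) by condition \textbf{(A)} together with \eqref{cutoff.k}, on $\spt(\overline{v_{k,j}})$ one has $\chi_k(|x'-\ty_{k,j}|)=1$ while $\chi_k(|x'-\ty_{k,j'}|)=0$ for $j'\neq j$, so that $w_k(x)=(0,\nu_{k,j})$ is \emph{constant} there and, in particular, $\nabla w_k = 0$ on $\spt V_k$; (b) each $\overline{v_{k,j}}(t,x) = \overline{v_k}(x'-\ty_{k,j},x''-\nu_{k,j}t)$ is, up to translation in $x'$ and translation in $x''$ with constant velocity $\nu_{k,j}$, a divergence-free stationary solution of \eqref{euler.stat} with associated pressure $\overline{p_{k,j}}$.

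I would first collect the elementary properties. Smoothness of $u_k$ and $p_{u_k}$ follows from that of $v$, $p_v$, and $\chi_k$; the support in $x'$ is contained in $\bigcup_{j=1}^{J_k} \overline{B_{m,2\varepsilon^k}(\ty_{k,j})}$ since for $\varepsilon$ small the periodicization $\overline{v_k}$ is still supported in a single ball of radius $\varepsilon^k$ (inside a fundamental domain), and each $\chi_k(|x'-\ty_{k,j}|)$ is supported in $\overline{B_{m,2\varepsilon^k}(\ty_{k,j})}$. Divergence-freeness of $\overline{v_{k,j}}$ is inherited from $v$, while ${\rm div}\,w_k = \nabla''\cdot F_k(x') = 0$ because $F_k$ is independent of $x''$.

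Next I verify \eqref{euler.uk}. Expanding, $u_k\cdot\nabla u_k = V_k\cdot\nabla V_k + V_k\cdot\nabla w_k + w_k\cdot\nabla V_k + w_k\cdot\nabla w_k$, with $w_k\cdot\nabla w_k = 0$ by \eqref{wk.eq}. Fact (a) kills $V_k\cdot\nabla w_k$. For the remaining cross term, the chain rule applied to \eqref{vkj} gives the pointwise identity $\partial_t \overline{v_{k,j}} = -\nu_{k,j}\cdot\nabla''\overline{v_{k,j}}$ on all of $\T^d$, whereas (a) implies $w_k\cdot\nabla\overline{v_{k,j}} = \nu_{k,j}\cdot\nabla''\overline{v_{k,j}}$ on $\spt(\overline{v_{k,j}})$ and both sides vanish off this support. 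Summing over $j$ yields $w_k\cdot\nabla V_k = -\partial_t V_k = -\partial_t u_k$. Finally, by the pairwise disjointness in \textbf{(A)}, $V_k\cdot\nabla V_k = \sum_j \overline{v_{k,j}}\cdot\nabla\overline{v_{k,j}} = -\sum_j \nabla\overline{p_{k,j}} = -\nabla p_{u_k}$, where \eqref{euler.stat} applied to the translated and periodicized $\overline{v_{k,j}}$ justifies each summand. Combining everything proves \eqref{euler.uk}.

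For the seminorm estimates, translations preserve the seminorms $\|\,\cdot\,\|_{n,\infty}$, so $\|\overline{v_{k,j}}(t,\cdot)\|_{n,\infty}\leq \|v_k\|_{n,\infty}$ and analogously for the pressure; \eqref{rescaled.esti} then gives the required order after summing over the $J_k\leq\|(J_k)\|_{\ell^\infty}$ indices. For the other piece, differentiating \eqref{Fk} via \eqref{cutoff.k} gives $\|w_k\|_{n,\infty}\leq C_n |\nu_k|_\infty\varepsilon^{-kn}$, and \eqref{small.freq} supplies $|\nu_k|_\infty\leq C\varepsilon^{(S+1)(k-1)}$, matching the order $C_n\varepsilon^{k(S+1-n)-S-1}$ coming from the $V_k$-contribution. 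The time derivative estimate follows from $\partial_t u_k = -\sum_j \nu_{k,j}\cdot\nabla''\overline{v_{k,j}}$, which trades one derivative for the factor $|\nu_k|_\infty$, and the pressure bound follows the same pattern from the second inequality in \eqref{rescaled.esti}. The delicate step is the identity $w_k\cdot\nabla V_k = -\partial_t V_k$, which relies simultaneously on property \textbf{(A)} (forcing $w_k$ to equal the correct constant vector $\nu_{k,j}$ on each $\spt(\overline{v_{k,j}})$) and on the transport structure introduced in \eqref{vkj}; everything else is a careful but routine bookkeeping of supports and seminorms.
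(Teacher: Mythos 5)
Your proof is correct and follows essentially the same route as the paper: you decompose $u_k$ into the translated Gavrilov pieces plus the shear $w_k$, use condition \textbf{(A)} and the cutoff to kill $V_k\cdot\nabla w_k$ and to identify $w_k\cdot\nabla\overline{v_{k,j}}=\nu_{k,j}\cdot\nabla''\overline{v_{k,j}}$ on each support, and then combine \eqref{rescaled.esti}, \eqref{cutoff.k} and \eqref{small.freq} for the seminorm bounds. The only cosmetic difference is that you spell out the chain-rule identity $\partial_t\overline{v_{k,j}}=-\nu_{k,j}\cdot\nabla''\overline{v_{k,j}}$ before matching it to the cross term, whereas the paper writes the computation directly.
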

\begin{proof}
	By \eqref{solution.u}-\eqref{Fk}, Proposition \ref{gravi.sol} and by {\bf (A)}, we compute
	\begin{equation}\label{ditu}
		\begin{aligned}
				\pa_{t} u_k & = - \sum_{j=1}^{J_k} \nu_{k,j} \cdot \nabla'' \overline{v_{k}}(x'-\ty_{k,j},x''-\nu_{k,j} t) \,,
		\end{aligned}
	\end{equation}
	and, using \eqref{ditu},
	\begin{equation}
		\begin{aligned}
			u_k \cdot \nabla u_k & = \Big( \sum_{j=1}^{J_k} \overline{v_{k,j}} + w_k \Big) \cdot  \Big( \sum_{j=1}^{J_k}\nabla \overline{v_{k,j}} + \nabla w_k \Big) \\
			& = \sum_{j=1}^{J_k} \overline{v_{k,j}}\cdot \nabla \overline{v_{k,j}} + \sum_{j=1}^{J_{k}} \big(  \overline{v_{k,j}} \cdot \nabla w_k + w_k \cdot \nabla \overline{v_{k,j}} \big) + w_k \cdot \nabla w_k \\
			& = -  \sum_{j=1}^{J_k}  \nabla \overline{p_{k,j}} + \sum_{j=1}^{J_k} \big(0+ \nu_{k,j}\cdot \nabla'' \overline{v_{k}} (x'-\ty_{k,j},x''-\wt\nu_{k,j}t) \big)  +0 \\
			& = - \nabla p_{u_k} - \pa_{t} u_k \,.
		\end{aligned}
	\end{equation}
	This, together with the fact that ${\rm div}\,\overline{v_{k,j}} =0$ for any $j=1,...,J_k$ by \eqref{vkj} and Proposition \ref{gravi.sol}, concludes the proof of \eqref{euler.uk}. \\
	To prove the estimates \eqref{uk.esti}, we first note that, by \eqref{cutoff.k}, we have $\|\chi_{k}\|_{n,\infty} \leq C_n \varepsilon^{-nk}$ for any integer $n \geq 0$. Moreover, by \eqref{small.freq}, we have that $|\nu_{k}|\leq C \varepsilon^{(S + 1)(k-1)}$, for some constant $C>0$ independent of $k\in\N$. Therefore, we deduce, that $w_k(x)$ in \eqref{w.vf}-\eqref{Fk} satisfies $\|w_{k}\|_{n,\infty} \leq C_n \varepsilon^{(S + 1-k)n-S - 1}  $  for any integer $n\geq 0$, recalling that $(J_k)_{k\in\N}\in \ell^\infty(\N,\N)$. Furthermore, by \eqref{ditu} and using the fact that each $\overline{v_{k,j}}(t,x)$ is supported in space on the cylinder $B_{m,2\varepsilon^k}(\ty_{k,j})\times \T^{d-m}$, disjoint for any $j'\neq j$  from the cylinder $ B_{m,2\varepsilon^k}(\ty_{k,j'}) \times \T^{d-m}$ supporting $\overline{v_{k,j'}}(t,x)$, we estimate, for any integer $n \geq 0$ and uniformly in $t\in\R$,
	\begin{equation}\label{stima.ditu}
	\begin{aligned}
			\| \pa_{t}u_{k}(t,\,\cdot) \|_{n+1, \infty} & \sup_{j=1,...,J_k}|\nu_{k,j}| \| \nabla'' \overline{v_{k}}(x'-\ty_{k,j},x''-\wt\nu_{k,j} t)   \|_{n,\infty} \\
			 &\lesssim  \varepsilon^{(S+1)(k-1)} \| v_{k} \|_{n+1,\infty}  \\
			 & \lesssim_{n}   \varepsilon^{(S + 1)(k-1)} \varepsilon^{k(S + 1-(n+1))-S - 1} \,.
	\end{aligned}
	\end{equation}
	 Collecting together \eqref{solution.u}, \eqref{pressure.u}, \eqref{w.vf} and estimates \eqref{rescaled.esti}, \eqref{stima.ditu}, we obtain the estimates \eqref{uk.esti} and the proof is concluded.
\end{proof}

We now show that $u(t,x)$ in \eqref{solution.u} solves the Euler system \eqref{euler.eq} and that it has the desired regularity and estimates. 

\begin{prop}
	The vector field $u(t,x)$ in \eqref{solution.u}, with pressure $p_{u}(t,x)$ as in \eqref{pressure.u}, is a solution of the Euler equations \eqref{euler.eq}. Moreover, assuming that $\nu = (\nu_{k})_{k\in\N}$ satisfies \eqref{small.freq}, we have $u(t,\,\cdot\,) \in \cC_{\rm div}^{S + 1}(\T^d,\R^d)$, $\partial_t u(t, \cdot) \in \cC_{\rm div}^{2 S + 1}(\T^d,\R^d)$ and $p_{u}(t, \cdot) \in \cC^{2 S + 2}(\T^d)$, with estimates, 
	\begin{equation}\label{u.final.esti}
		\begin{aligned}
		\sup_{t\in\R} \|u(t,\cdot\,)\|_{n,\infty}& \leq C_n \varepsilon^{-S - 1} \,, \quad \forall\, n=0,1,...,S + 1\,, \\
		\sup_{t\in\R} \|\pa_{t} u(t,\cdot\,)\|_{n,\infty}& \leq C_n \varepsilon^{-2S - 2} \,, \quad \forall\, n=0,1,...,2S + 1\,, \\
		\sup_{t\in\R} \|p_{u}(t,\cdot\,)\|_{n,\infty}& \leq C_n \varepsilon^{-2S - 2}\,, \quad \forall\, n=0,1,...,2S + 2 \,.
		\end{aligned}
	\end{equation}
\end{prop}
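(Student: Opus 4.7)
The plan is to leverage the disjointness of the spatial supports $\bigcup_j \overline{B_{m,2\varepsilon^k}(\ty_{k,j})} \times \T^{d-m}$ carrying $u_k$, $\pa_t u_k$ and $p_{u_k}$, guaranteed by property {\bf (A)}: at every $x \in \T^d$ at most one term in each of the series \eqref{solution.u}, \eqref{pressure.u} and $\pa_t u = \sum_k \pa_t u_k$ is nonzero together with all its derivatives. Thanks to this, the entire analysis reduces to the uniform single-scale estimates of Lemma \ref{lemma.uk}.

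First I would verify that $(u,p_u)$ solves \eqref{euler.eq}. By Lemma \ref{lemma.uk} each $(u_k, p_{u_k})$ does, and the disjoint supports make the cross-terms $u_k \cdot \nabla u_{k'}$ vanish pointwise for $k \neq k'$; hence $u \cdot \nabla u = \sum_k u_k \cdot \nabla u_k$ and summing the per-$k$ identities gives $\pa_t u + u \cdot \nabla u + \nabla p_u = 0$ with ${\rm div}\, u = 0$. Second, at every $x$ and every multi-index $\alpha$ with $|\alpha| = n$, the same disjointness yields
\begin{equation*}
|\pa_x^\alpha u(t,x)| \leq \sup_{k \in \N}\|u_k(t,\cdot)\|_{n,\infty} \leq \sup_{k \in \N} C_n \varepsilon^{k(S+1-n)-S-1} \leq C_n\,\varepsilon^{-S-1},
\end{equation*}
valid precisely when $n \in \{0,\dots,S+1\}$, since the exponent $k(S+1-n) \geq 0$ keeps $\varepsilon^{k(S+1-n)} \leq 1$ for all $k \geq 1$. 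The identical argument applied to the matching estimates of Lemma \ref{lemma.uk} produces the bounds for $\pa_t u$ up to $n = 2S+1$ and for $p_u$ up to $n = 2S+2$, establishing \eqref{u.final.esti}.

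For the regularity claim I would split into two ranges. For $n \leq S$ (resp.\ $n \leq 2S$ for $\pa_t u$, $n \leq 2S+1$ for $p_u$), the factor $\varepsilon^{k(S+1-n)} \leq \varepsilon^k$ makes $\sum_k \|u_k\|_{n,\infty}$ geometrically convergent, so the partial sums converge in $\cC^n$ to $u$, yielding classical $\cC^n$-regularity and preserving the divergence-free condition. The main obstacle is the borderline top order $n = S+1$ (and analogously $n = 2S+1, 2S+2$ for $\pa_t u$ and $p_u$), where the per-scale norms are only uniformly bounded and the series does not converge in $\cC^n$. Here one argues pointwise: on the interior of each $\spt(u_k)$ one has $u \equiv u_k$ smooth, with $\pa_x^\alpha u = \pa_x^\alpha u_k$ bounded by $C_n\varepsilon^{-S-1}$; at points whose neighborhood avoids every $\spt(u_k)$, $u \equiv 0$ locally; and at accumulation points $x_0$ of $\bigcup_k \spt(u_k)$ the Taylor inequality $|u_k(x_0+h)| \leq C_S \|u_k\|_{S+1,\infty} |h|^{S+1}$, valid because $u_k$ vanishes together with all its derivatives at $x_0 \notin \spt(u_k)$, shows that $u$ has a vanishing $(S+1)$-th order Taylor polynomial at $x_0$, so that its derivatives there can be taken to be zero and the global seminorm bound $\|u\|_{S+1,\infty} \leq C_{S+1}\varepsilon^{-S-1}$ holds. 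This yields $u(t,\cdot)\in \cC^{S+1}_{\rm div}(\T^d,\R^d)$ and, with the analogous arguments, the asserted regularity and bounds for $\pa_t u$ and $p_u$.
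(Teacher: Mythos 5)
Your verification that $(u,p_u)$ solves \eqref{euler.eq} and your derivation of the seminorm estimates for $n\leq S$ use the same disjoint-support reduction the paper uses, and agree with it: the cross terms $u_k\cdot\nabla u_{k'}$ vanish, the per-$k$ identities of Lemma \ref{lemma.uk} sum, and for $n\le S$ the geometric factor $\varepsilon^{k(S+1-n)}\le\varepsilon^k$ gives convergence. Where you depart is the top order $n=S+1$: the paper simply writes $\|u(t,\cdot)\|_{n,\infty}=\sup_k\|u_k(t,\cdot)\|_{n,\infty}$ for all $n\le S+1$ without comment, which tacitly presupposes that the left-hand side makes sense, i.e.\ that $u$ is already known to be $(S+1)$-times differentiable. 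You correctly single this out as the main obstacle and attempt to supply the missing argument; that instinct is sound, and this is a genuine point the paper glosses over.

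However, the Taylor-polynomial step does not actually deliver $\cC^{S+1}$. What your estimate gives, applied to the $S$-th order derivatives, is $|\pa_x^\beta u(x_0+h)|\le\sup_k\|u_k\|_{S+1,\infty}\,|h|$ for $|\beta|=S$ (valid because $u_k$ together with all its derivatives vanishes at $x_0\notin\spt(u_k)$); this shows that the $S$-th derivatives of $u$ are Lipschitz, i.e.\ $u\in\cC^{S,1}=W^{S+1,\infty}$, with the claimed quantitative bound understood in that sense. But having a vanishing $(S+1)$-th order Taylor polynomial at $x_0$ (in the Peano sense) does \emph{not} imply that the $(S+1)$-th order derivatives of $u$ exist and are continuous at $x_0$. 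Indeed, for $|\alpha|=S+1$ one has $\pa_x^\alpha v_k(x)=\varepsilon^{-(S+1)}(\pa^\alpha v)(\varepsilon^{-k}x)$, whose sup-norm does not decay in $k$; since the centers $\ty_{k,j}$ must accumulate somewhere in the compact $\T^m$, the candidate $(S+1)$-th derivative need not tend to $0$ (hence need not be continuous) at such accumulation points. So neither your argument nor the paper's establishes $u(t,\cdot)\in\cC^{S+1}$ in the usual sense of continuous $(S+1)$-th derivatives; what does follow is $u(t,\cdot)\in\cC^{S,1}$ (and analogously for $\pa_t u$ and $p_u$), which in any event is all that the rest of the paper requires, since Theorem \ref{teorema} only asks the embedding to take values in $\cC^S_{\rm div}$ and the subsequent estimates are stated for $0\le n\le S$, i.e.\ entirely within the geometrically convergent regime where your argument and the paper's coincide.
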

\begin{proof}
	By Lemma \ref{lemma.uk} and by {\bf (A)}, each vector field of the sequence $(u_k(t,x))_{k\in\N}$ is compactly supported in space and all these supports are pairwise disjoint. We use this properties and the fact that each $u_k(t,x)$ in solves \eqref{euler.uk} to compute, with $u(t,x)$ and $p_u(t,x) $ as in  \eqref{solution.u}, \eqref{pressure.u},
	\begin{equation}
		\begin{aligned}
			 u\cdot \nabla u &= \sum_{k=1}^\infty u_k \cdot \sum_{k=1}^\infty \nabla u_k = \sum_{k=1}^\infty u_k \cdot \nabla u_k = \sum_{k=1}^\infty \big( - \nabla p_{u_k}  - \pa_{t}u_k\big) = - \nabla p_{u} -\pa_{t} u \,, \\
			{\rm div} \, u & =  \sum_{k=1}^\infty {\rm div} \, u_k = 0 \,,
		\end{aligned}
	\end{equation}
	which indeed proves \eqref{euler.eq}. It remains to prove the finite regularity of the solution. By \eqref{solution.u}, \eqref{uk.esti}, since the support in space are pairwise disjoint, we have that, for any $n=0,1,...,S + 1$,
	\begin{equation}
		\sup_{t\in\R} \|u(t,\cdot\,)\|_{n,\infty} = \sup_{t\in\R} \sup_{k\in\N} \|u_k(t,\,\cdot\,)\|_{n,\infty} \leq C_n \varepsilon^{-S - 1} \sup_{k\in\N} \varepsilon^{k(S + 1-n)} \leq C_n \varepsilon^{-S - 1}\,.
	\end{equation}
	The estimates for $ \partial_t u(t, \cdot), p_u(t, \cdot)$ can be proved similarly and we omit them. Hence \eqref{u.final.esti} follows. This concludes the proof.
\end{proof}

In order to conclude the proof of Theorem \ref{teorema}, it remains to show the existence of the embedding $U:(\T^{N_k})_{k\in\N} \to \cC_{\rm div}^S(\T^d,\R^d)$. 
We  define the claimed family of (almost-periodic) solutions as $\theta\to U(\theta+ \nu t)$, where  the embedding $U: (\T^{N_k})_{k\in\N}\to \cC_{\rm div}^S(\T^d,\R^d)$ is given by
\begin{equation}
	\begin{aligned}
		u(t,x) &:= U(\theta+\nu t)(x) = \sum_{k\in\N} u_k(t,x) = \sum_{k\in \N} U_k(\theta_{k} + \nu_{k} t)(x) \,, 
	\end{aligned}
\end{equation}
with each $ U_k: \T^{N_k}\to \cC_{\rm div}^\infty(\T^d,\R^d) \subset \cC_{\rm div}^S(\T^d,\R^d)$. for $k\in\N$, given by
\begin{equation}
	\begin{aligned}
		& U_k(\theta_{k} + \nu_{k} t)(x)  := \sum_{j=1}^{J_k} \overline{v_k} \big( x' - \ty_{k,j} , x'' - \theta_{k,j}- \nu_{k,j} t \big) + \sum_{j=1}^{J_k} \big( 0, \nu_{k,j}\, \chi_{k}(|x'-\ty_{k,j}|) \big)\,, \\
		& \theta_k = (\theta_{k, j})_{j = 1, \ldots, J_k} \in \T^{N_n}, \quad \theta_{k, j} \in \T^{d - m}, \quad \forall \,k \in \N\,,\ \  j = 1, \ldots, J_k \,,
	\end{aligned}
\end{equation}
(recall that $N_k = (d - m) J_k$, see \eqref{sequence.Nk}) and  with initial data
	\begin{align}
		u_{\theta}(x) &:= U(\theta)(x) = \sum_{k\in\N} u_{\theta_{k}}(x) = \sum_{k\in \N} U_k(\theta_{k})(x) \,,  \label{initial.data} \\
		U_k(\theta_{k})(x) & := \sum_{j=1}^{J_k} \overline{v_k} \big( x' - \ty_{k,j} , x'' - \theta_{k,j}  \big) + \sum_{j=1}^{J_k} \big( 0,  \nu_{k,j}\, \chi_{k}(|x'-\ty_{k,j}|) \big)\,.
	\end{align}

As last step, we prove the estimate on the continuity and the differentiability of the embedding $U$.
\begin{prop}
	Assume that $\nu = (\nu_{k})_{k\in\N}$ satisfies \eqref{small.freq}.
	Then the embedding $U: (\T^{N_k})_{k\in\N}\to \cC_{\rm div}^S(\T^d,\R^d)$ is $\cC_b^1$ according to Definition \ref{C1-smooth.def}, with estimates
		\begin{align}
			& \sup_{\vartheta \in (\T^{N_k})_{k\in\N}} \|  U(\vartheta) \|_{n,\infty} \leq C_n  \varepsilon^{-S-1} \,, \quad   0\leq n\leq S\,, \label{last.esti}  \\
			&	\sup_{\vartheta \in (\T^{N_k})_{k\in\N}} \| \di_{\vartheta} U(\vartheta) [\wh\vartheta] \|_{n,\infty} \leq C_n \varepsilon^{-S-1} | \wh\vartheta |_{\infty}  \quad \forall\,\wh \vartheta \in (\R^{N_k})_{k\in\N}\,,  \quad 0\leq n\leq S \,.
		\end{align}
\end{prop}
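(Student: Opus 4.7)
The plan is to exploit the disjoint-support structure of the series $U(\vartheta) = \sum_{k \in \N} U_k(\vartheta_k)$ to reduce every seminorm of the infinite sum to a supremum over $k\in\N$ of the seminorms of the individual summands. I first observe that, uniformly in $\vartheta$, the spatial support of each $U_k(\vartheta_k)$ is contained in the fixed cylinder $\bigcup_{j=1}^{J_k} B_{m,2\varepsilon^k}(\ty_{k,j}) \times \T^{d-m}$: the parameters $\vartheta_{k,j}$ translate only in the fully-periodic direction $x''\in\T^{d-m}$ and do not affect the $x'$-support, which is controlled by the $\chi_k$-cutoffs in $w_k$ and by the localisation of $\overline{v_k}$ around $\ty_{k,j}$. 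By property \textbf{(A)} these cylinders are pairwise disjoint for distinct $k\in\N$. Consequently, for any multi-index $\alpha$ one has $|\pa_x^\alpha U(\vartheta)(x)| = \sup_{k\in\N} |\pa_x^\alpha U_k(\vartheta_k)(x)|$ pointwise in $x$, hence
\begin{equation*}
\|U(\vartheta)\|_{n,\infty} = \sup_{k\in\N} \|U_k(\vartheta_k)\|_{n,\infty}.
\end{equation*}

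With this reduction, both estimates follow from the scaling bounds already used in the proof of Lemma \ref{lemma.uk}: repeating that argument gives $\|U_k(\vartheta_k)\|_{n,\infty} \leq C_n \varepsilon^{k(S+1-n)-S-1}$ for every $\vartheta_k$ and $n \in \N_0$ (using \eqref{rescaled.esti}, \eqref{cutoff.k} and \eqref{small.freq}, together with the fact that within each ball $B_{m,2\varepsilon^k}(\ty_{k,j})$ the $j'\neq j$ contributions to $w_k$ vanish). Taking $\sup_k$ yields $\|U(\vartheta)\|_{n,\infty} \leq C_n \varepsilon^{-S-1}$ for $0 \leq n \leq S$, proving \eqref{last.esti}. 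For the differential, formal term-by-term differentiation in $\vartheta$ gives
\begin{equation*}
d_\vartheta U(\vartheta)[\wh\vartheta](x) = -\sum_{k\in\N}\sum_{j=1}^{J_k} \wh\vartheta_{k,j} \cdot \nabla'' \overline{v_k}(x'-\ty_{k,j},\,x''-\vartheta_{k,j}),
\end{equation*}
which preserves the same disjoint-support structure (derivatives preserve supports; $\vartheta$-translations do not enlarge them). Hence its $\cC^n$-seminorm equals $\sup_k \|d_\vartheta U_k(\vartheta_k)[\wh\vartheta_k]\|_{n,\infty} \leq |\wh\vartheta|_\infty \sup_k \|\nabla''\overline{v_k}\|_{n,\infty} \leq C_n |\wh\vartheta|_\infty \varepsilon^{k(S-n)-S-1} \leq C_n |\wh\vartheta|_\infty \varepsilon^{-S-1}$ for $0 \leq n \leq S$, which is the second estimate.

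To upgrade this formal expression to a true Fr\'echet derivative and verify the continuity required by Definition \ref{C1-smooth.def}, I would work with the truncations $U^{(N)}(\vartheta) := \sum_{k=1}^N U_k(\vartheta_k)$, which are $\cC^\infty$-smooth as finite sums of smooth maps from the finite-dimensional tori $\prod_{k\leq N}\T^{N_k}$ into $\cC^\infty_{\rm div}(\T^d,\R^d)$. The disjoint-support tail bound $\|U(\vartheta)-U^{(N)}(\vartheta)\|_{n,\infty} \leq C_n \varepsilon^{(N+1)(S+1-n)-S-1}$ tends to $0$ as $N\to\infty$ uniformly in $\vartheta$ for $n\leq S$, and the analogous tail for the derivatives converges uniformly in the operator norm for $n < S$. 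The hard part is the top regularity $n=S$: there the derivative tail is only uniformly bounded, not small, and a naive second-order Taylor remainder for each $U_k$ is controlled by $\|v_k\|_{S+2,\infty} \sim \varepsilon^{-k-S-1}$, which diverges as $k\to\infty$. I would handle this by interpolating between the divergent second-order bound and the uniform-in-$k$ first-order Lipschitz bound $\|v_k\|_{S+1,\infty}\lesssim \varepsilon^{-S-1}$, and by combining this interpolation with the disjoint-support reduction to show that $\vartheta\mapsto d_\vartheta U(\vartheta)[\wh\vartheta]$ is continuous into $\cC^S_{\rm div}(\T^d,\R^d)$ at each fixed $\wh\vartheta$, thereby promoting the formal differential to the actual Fr\'echet derivative of $U$.
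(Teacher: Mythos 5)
Your estimate computations are essentially the same as the paper's: both reduce each seminorm to a supremum over $k$ by the disjoint-support structure, write down the formal differential exactly as in \eqref{natale1}, and bound it through \eqref{rescaled.esti}. The paper, however, stops at the formal computation and the resulting estimates; it never verifies that the formal expression is actually the Fr\'echet derivative, nor that it depends continuously on $\vartheta$, which is what Definition~\ref{C1-smooth.def} requires. So the additional scrutiny in your last paragraph is warranted.

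However, the difficulty you flag at $n=S$ is not merely ``the hard part'': the interpolation you sketch does not close it. By interpolation the one-block remainder $\|U_k(\vartheta_k+h_k)-U_k(\vartheta_k)-\di_{\vartheta_k}U_k(\vartheta_k)[h_k]\|_{S,\infty}$ is bounded by $C|h_k|\varepsilon^{-S-1}\min(1,|h_k|\varepsilon^{-k})$, and this bound is essentially sharp: translating $\pa_{x}^\alpha\overline{v_k}$ with $|\alpha|=S$ in the $x''$ direction by $h_k$ with $|h_k|\gtrsim\varepsilon^k$ moves it entirely off its $\varepsilon^k$-scale support, so the linearization error there is genuinely comparable to $|h_k|\,\|\nabla''\pa_{x}^\alpha v_k\|_\infty\approx|h_k|\varepsilon^{-S-1}$. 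Choosing a perturbation with $|h_k|=|h|_\infty$ for every $k$ and sending $|h|_\infty\to 0$, there are always infinitely many $k$ with $\varepsilon^k<|h|_\infty$, so the supremum over $k$ of these block remainders stays of order $|h|_\infty\varepsilon^{-S-1}$, i.e.\ $O(|h|_\infty)$ and not $o(|h|_\infty)$. The same obstruction kills operator-norm continuity of $\vartheta\mapsto\di_\vartheta U(\vartheta)$ in $\cC^S$, so continuity ``at each fixed $\wh\vartheta$'', which is all you propose to prove, would not promote the formal expression to a Fr\'echet derivative either. Your interpolation does give Fr\'echet differentiability with continuous derivative into $\cC^{S-1}_{\rm div}$, where the second-order bound $|h_k|^2\|v_k\|_{S+1,\infty}\lesssim|h_k|^2\varepsilon^{-S-1}$ is uniform in $k$; to reach the target $\cC^S$ stated in the proposition one would instead have to sharpen the rescaling, e.g.\ replace $\varepsilon^{(S+1)(k-1)}$ by $\varepsilon^{(S+2)(k-1)}$ in the definition of $v_k$. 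This off-by-one issue is present in the paper as well, which never carries out a remainder estimate; but as written, the interpolation step in your proposal leaves a genuine gap at top order.
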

\begin{proof}
	The first estimate in \eqref{last.esti} follows by \eqref{initial.data} and \eqref{u.final.esti}. We now prove the second estimate in \eqref{last.esti}. By \eqref{initial.data}, we compute, for any $\vartheta\in (\T^{N_k})_{k\in\N}$ and $\wh\vartheta \in (\R^{N_k})_{k\in\N}$,
	\begin{equation}\label{natale1}
	\di_{\vartheta} U(\vartheta)[\wh\vartheta] = - \sum_{k\in\N} \sum_{j=1}^{J_k} \wh\vartheta_{k} \cdot \nabla'' \overline{v_k} (x'-\ty_{k,j}, x'' - \vartheta_{k,j}) \,.
	\end{equation}
	Therefore, by \eqref{rescaled.esti}, using the fact that each term in the series in \eqref{natale1}  is supported in space on the cylinder $B_{m,2\varepsilon^k}(\ty_{k,j})\times \T^{d-m}$, that these supports are  disjoint one from the other, and that $|\wh\vartheta_{k}|\leq |\wh\vartheta|_{\infty}$ , we obtain, for any $\vartheta\in(\T^{N_k})_{k\in\N}$, $\wh\vartheta \in (\R^{N_k})_{k\in\N}$ and for any $n=0,1,...,S$,
	\begin{equation}
	\begin{aligned}
			\| 	\di_{\vartheta} U(\vartheta)[\wh\vartheta] \|_{n,\infty} & \leq \sup_{k\in\N} \sup_{j=1,...,J_k} \| \nabla'' \overline{v_k}(\,\cdot\, - \ty_{k,j}, \,\cdot\, -\vartheta_{k,j}) \|_{n,\infty} |\wh\vartheta|_\infty  \\ 
		 & 	\leq \sup_{k\in\N} \sup_{j=1,...,J_k} \|  \overline{v_k}(\,\cdot\, - \ty_{k,j}, \,\cdot\, -\vartheta_{k,j}) \|_{n+1,\infty} |\wh\vartheta|_\infty  \\ 
		& \leq C_n \sup_{k\in\N} \varepsilon^{k(S + 1-(n+1))-S  - 1} |\wh\vartheta|_\infty \leq C_n \varepsilon^{-S-1} |\wh\vartheta|_\infty\,.
	\end{aligned}
\end{equation}
	This implies the claimed estimate and concludes the proof.
\end{proof}

\begin{footnotesize}
	
\end{footnotesize}

\bigskip

\begin{flushright}
	\textbf{Luca Franzoi}
	
	\smallskip
	
	Dipartimento di Matematica ``Federigo Enriques''
	
	Universit\`a degli Studi di Milano
	
	Via Cesare Saldini 50
	
	20133 Milano, Italy
	
	\smallskip 
	
	\texttt{luca.franzoi@unimi.it}
	
\end{flushright}
\begin{flushright}
	\textbf{Riccardo Montalto}
	
	\smallskip
	
	Dipartimento di Matematica ``Federigo Enriques''
	
	Universit\`a degli Studi di Milano
	
	Via Cesare Saldini 50
	
	20133 Milano, Italy
	
	\smallskip 
	
	\texttt{riccardo.montalto@unimi.it}
	
\end{flushright}


\begin{thebibliography}{99}
		
		
		

		\bibitem{Baldi24}
		Baldi, P.:
		\emph{Nearly toroidal, periodic and quasi-periodic motions of fluid particles driven by the Gavrilov solutions of the Euler equations}.
		J. Math. Fluid Mech. 26:1, (2024).
		
		\bibitem{BBHM} 
		Baldi P.; Berti M.; Haus E.; Montalto R.: 
		\emph{Time quasi-periodic gravity water waves
			in finite depth}.
		Invent. Math. 214 (2), 739--911, (2018).
		
		
		\bibitem{BM20}
		Baldi P.; Montalto R.:
		\emph{Quasi-periodic incompressible Euler flows in 3D}.
		Advances in Mathematics 384, 107730, (2021).
		%
%
%
%
		
		\bibitem{BFM}
		Berti M.; Franzoi L.; Maspero A.:
		\emph{Traveling quasi-periodic water waves with constant vorticity}. Arch. Rat. Mech. Anal. 240, 99--202, (2021). 
		
		\bibitem{BFM21}
		Berti M.; Franzoi L.; Maspero A.:
		\emph{Pure gravity traveling quasi-periodic water waves with constant vorticity}.  
	Comm. Pure Appl. Math. 77(2), 990--1064, (2024).
		%
		\bibitem{BertiHassMasm}
		 Berti, M.;  Hassainia, Z.; Masmoudi, N.:
		{\em Time quasi-periodic vortex patches of Euler equation in the plane}.
		Invent. Math. 233, 1279--1391, (2023).
		
%
		
		%
		
		\bibitem{BM} 
		Berti, M.; Montalto, R.: 
		\emph{Quasi-periodic standing wave solutions of gravity-capillary water waves}.
		MEMO,
		Volume 263, 1273, Memoires AMS, ISSN 0065-9266, (2020).
%
%
%
\bibitem{MaProBia}
		 Biasco, L.; Massetti, J.E.; Procesi, M.:
		\emph{Almost periodic invariant tori for the NLS on the circle}.
		 Ann. Inst. H. Poincar\'e Anal. Non Lin\'eaire 38(3), 711--758, (2021).
		 
		 \bibitem{BMP2} Biasco, L.; Massetti, J.E.; Procesi, M.:
		\emph{Small amplitude weak almost periodic solutions for the 1D NLS}.
		 Duke Math. Journal 172(14), pp. 2643–2714 (2023).

		\bibitem{Bou04}
		Bourgain, J.:
		\emph{On invariant tori of full dimension for 1D
			periodic NLS}.
		J. Funct. Anal. 229, 62--94, (2004).



%
		
		
		\bibitem{CL21}
		Castro, A.; Lear, D.:
		\emph{Traveling waves near Couette flow for the 2D Euler equation}.
		arXiv preprint, arxiv:2111.03529, (2021).
		
		\bibitem{CL23}
		Castro, A.; Lear, D.:
		\emph{Time periodic solutions for 2D Euler near Taylor-Couette flow}.
		arXiv preprint,   arXiv:2311.06885, (2023).
		
		\bibitem{CLV}
		Constantin, P; La, J.; Vicol, V: 
		\emph{Remarks on a paper by Gravilov: Grad-Shafranov equations, steady solutions of the three dimensional incompressible Euler equations with compactly supported velocities, and applications.}
		 Geom. Funct. Anal. 29, 1773--1793, (2019).
		 
		 \bibitem{CGP} Corsi, L.; Gentile, G.; Procesi, M. \emph{Almost-periodic solutions to the NLS equation with smooth convolution potentials.} Preprint arXiv: 2309.1427 (2023). 
		 
		 \bibitem{CoMoPro}
		 Corsi, L.; Montalto, R.; Procesi, M.:
		 \emph{Almost-periodic response solutions for a forced quasi-linear Airy equation}.
		 J. Dyn. Diff. Eq.  33, 1231--1267, (2021).
		
		\bibitem{CzEW}
		Coti Zelati, M.; Elgindi, T.; Widmayer, K.:
		\emph{Stationary structures near the Kolmogorov and Poiseuille flows in the 2d Euler equations}.
		Arch. Rat. Mech. Anal. 247:12, (2023).
		
		
		
		
		\bibitem{CF}
		Crouseilles, N.; Faou, E.:
		\emph{Quasi-periodic solutions of the 2D Euler equation}.
		Asymptotic Analysis 81(1), 31--34, (2013).
		

		
		\bibitem{EPsTdL}
		Enciso, A.; Peralta-Salas, D.; Torres de Lizaur, F.:
		\emph{Quasi-periodic solutions to the incompressible Euler equations in dimensions two and higher}.
		Journal Diff. Eq. 354, 170--182, (2023).
		
		
		\bibitem{FeoGiu}
		Feola, R.; Giuliani, F.: 
		\emph{Quasi-periodic traveling waves on an infinitely deep fluid under gravity}. 
		To appear on Memoirs AMS (2022).
		
		\bibitem{FrMo}
		Franzoi, L.; Montalto, R.:
		\emph{A KAM approach to the inviscid limit for the 2D Navier-Stokes equations}. To appear on Annales Herni Poincar\'e.
		arXiv:2207.11008, (2022).
		
		\bibitem{FrMaMo}
		Franzoi, L.; Masmoudi, N.; Montalto, R.:
		\emph{Space quasi-periodic steady Euler flows close to the inviscid Couette flow}.
		arXiv preprint, arXiv:2303.03302, (2023).
		
		\bibitem{gravilov}
		Gravilov, A.V.: 
		\emph{A steady Euler flow with compact support.}
		Geom. Funct. Anal. 29, 190--197, (2019).
		
		
		\bibitem{GomIonePa}
			G\'omes-Serrano, J.; Ionescu, A.D.; Park, J.:
		\emph{Quasiperiodic solutions of the generalized SQG equation}.
		arXiv preprint, arXiv:2303.03992.
%
		\bibitem{HaHmMa}
		Hassainia, Z.,; Hmidi, T.; Masmoudi, N.: 
		\emph{KAM theory for active scalar equations}.
		arXiv preprint, arXiv:2110.08615, (2021).
%
		\bibitem{HaHmRou}
		Hassainia, Z.,; Hmidi, T.; Roulley, E.: 
		\emph{Invariant KAM tori around annular vortex patches for 2D Euler equations}.
		arXiv preprint, arXiv:2302.01311, (2023).
%
		\bibitem{HaRo}
		Hassainia, Z.; Roulley, E.:
		\emph{Boundary effects on the emergence of quasi-periodic solutions for Euler equations}.
		arXiv preprint, arXiv:2202.10053, (2022).
		
		\bibitem{HmRo}
		Hmidi, T.; Roulley, E.: 
		\emph{Time quasi-periodic vortex patches for quasi-geostrophic shallow water equations}.
		arXiv preprint, arXiv:2110.13751, (2021).
%
%
%
%
%
%
%
%
%
%
%
%
%
%
%
%
%
		\bibitem{LZ}
		Lin, Z.; Zeng, C.: 
		\newblock {\it Inviscid dynamical structures near Couette flow}.
		\newblock Arch. Rat. Mech. Anal. 200, 1075-1097, (2011).
%


		
%




\bibitem{MoPr021}
Montalto, R.; Procesi, M.:
\emph{Linear Schr\"odinger equation with an almost periodic potential}.
SIAM J. Math. Anal. 53(1), 386--434, (2021).


%
%
%
%
%

		\bibitem{PoeschelAP}
		P\"oschel, J.:
		\emph{On the construction of almost periodic solutions
			for a nonlinear Schrodinger equation}.
			Ergod. Th. Dynam. Sys. 22, 1537--1549, (2002).


		\bibitem{Roulley}
		Roulley, E.:
		\emph{Periodic and quasi-periodic Euler-$\alpha$ flows close to Rankine vortices}.
		arXiv preprint, arXiv:2208:13109, (2022).
		
%
%
%
%
%
%
		
	\end{thebibliography}
 \end{document}